\theoremstyle{plain}
\newtheorem{thm}{\protect\theoremname}
\theoremstyle{plain}
\newtheorem{prop}[thm]{\protect\propositionname}
\theoremstyle{plain}
\newtheorem{lem}[thm]{\protect\lemmaname}
\providecommand{\lemmaname}{Lemma}
\providecommand{\propositionname}{Proposition}
\providecommand{\theoremname}{Theorem}
\let\oldequation\equation
\let\oldendequation\endequation
\renewenvironment{equation}
  {\linenomathNonumbers\oldequation}
  {\oldendequation\endlinenomath}
\definecolor{orange}{rgb}{0.7,0.3,0}
\definecolor{blue}{rgb}{.2,.6,.75}
\definecolor{green}{rgb}{.4,.7,.4}
\definecolor{purple}{RGB}{127,0,255}
\newcommand{\abs}[1]{\lvert #1 \rvert}
\newcommand{\bigabs}[1]{\Bigl \lvert #1 \Bigr \rvert}
\begin{document}


\title[The distribution of spacings of real-valued lacunary sequences mod one]{The distribution of spacings of real-valued lacunary sequences modulo one}

\author{Sneha Chaubey}
\address{Department of Mathematics, Indraprastha Institute of Information Technology, New Delhi 110020, India}
\email{sneha@iiitd.ac.in}
\author{Nadav Yesha}
\address{Department of Mathematics, University of Haifa,
	Haifa 3498838,	Israel}
\email{nyesha@univ.haifa.ac.il}

\thanks{SC is supported by the Science and Engineering Research Board, Department of Science and Technology, Government of India under grant SB/S2/RJN-053/2018. NY is supported by the ISRAEL SCIENCE FOUNDATION (grant No. 1881/20).}
\begin{abstract}
Let $\left(a_{n}\right)_{n=1}^{\infty}$ be a lacunary sequence of
positive real numbers. Rudnick and Technau showed that for almost
all $\alpha\in\mathbb{R}$, the pair correlation of $\left(\alpha a_{n}\right)_{n=1}^{\infty}$
mod 1 is Poissonian. We show that all higher
correlations and hence the nearest-neighbour spacing distribution
are Poissonian as well, thereby extending a result of Rudnick and
Zaharescu to real-valued sequences.
\end{abstract}

%


\maketitle
\section{Introduction}
A sequence $\left(a_{n}\right)_{n=1}^{\infty}$ of real numbers is
said to be \emph{uniformly distributed modulo one} (u.d.\ mod 1), if
the fractional parts of the sequence are equidistributed in the unit
interval, i.e., for every interval $I\subseteq[0,1)$ we have
\[
\lim_{N\to\infty}\frac{1}{N}\#\left\{ 1\le n\le N:\,\left\{ a_{n}\right\} \in I\right\} = |I|.
\]

Questions about u.d.\ mod 1 have been studied for more than a century
now, going back to the pioneering work of Weyl \cite{Weyl}. Interestingly,
from a metric point of view, the conditions for u.d.\ mod $1$ are
quite modest: as was shown by Weyl \cite{Weyl}, if $\left(a_{n}\right)$ is any
sequence of \emph{distinct integers,} then the sequence $\left(\alpha a_{n}\right)$
is u.d.\ mod $1$ for almost all $\alpha\in\mathbb{R}$. It is also
well-known (see, e.g., \cite[Corollary 4.3]{KN}) that if $\left(a_{n}\right)$ is \emph{real-valued}
and is sufficiently well-spaced in the sense that there exists $\delta>0$
such that $\left|a_{n}-a_{m}\right|\ge\delta$ for all $n\ne m$,
then $\left(\alpha a_{n}\right)$ is u.d.\ mod $1$ for almost all
$\alpha\in\mathbb{R}$. The latter condition clearly holds when $\left(a_{n}\right)$
is a \emph{lacunary} sequence of positive real numbers, i.e., when
there exists $c>1$ such that $a_{n+1}/a_{n}\ge c$ for all $n\ge1$.

While very useful, the notion of u.d.\ mod $1$ cannot explain the
finer aspects of sequences modulo one, such as the \emph{pseudo-randomness}
of a sequence. Indeed, there is a growing interest in studying \emph{fine-scale}
statistics of sequences modulo one in the scale of the mean gap $1/N$;
one can test for pseudo-randomness by comparing these statistics to those
of random, uniformly distributed independent points in the unit interval
(Poisson statistics). A most natural statistic, which is very easy
to visualize, is the \emph{nearest-neighbour spacing distribution}
(or gap distribution), which is defined as follows: consider the \emph{ordered}
fractional parts $\left\{ a_{n}\right\} $ of the first $N$ elements
of the sequence, which we denote by
\[
a_{\left(1\right)}^{N}\le a_{\left(2\right)}^{N}\le\dots\le a_{\left(N\right)}^{N},
\]
and denote $a_{\left(N+1\right)}^{N}:=1+a_{\left(1\right)}^{N}$.
Let the normalized gaps be defined by
\[
\delta_{n}^{N}:=N\left(a_{\left(n+1\right)}^{N}-a_{\left(n\right)}^{N}\right);
\]
we say that nearest-neighbour spacing distribution is Poissonian if
for any $I\subseteq[0,\infty)$,
\[
\lim_{N\to\infty}\frac{1}{N}\#\left\{ 1\le n\le N:\,\delta_{n}^{N}\in I\right\} =\int_{I}e^{-s}\,ds,
\]
i.e., if the limit distribution agrees with the random model.

There are very few examples of sequences modulo one where a Poissonian
nearest-neighbour spacing distribution can be rigorously proved. Rudnick
and Zaharescu proved \cite{RZ2} that if $\left(a_{n}\right)$ is a
lacunary sequence of \emph{integers,} then the nearest-neighbour spacing
distribution of $\left(\alpha a_{n}\right)$ is Poissonian for almost
all $\alpha\in\mathbb{R};$ as will be detailed below, the main goal
of this note is to show an analogous result for \emph{real-valued}
lacunary sequences. Another natural question about real-valued lacunary
sequences with a different notion of randomization (which dramatically changes the problem) is whether the sequence $\left(\alpha^{n}\right)$ has a Poissonian
nearest-neighbour spacing distribution for almost all $\alpha>1$ --
this was recently answered positively in \cite{ABTY}, as a special case
of a more general family of sequences (which include some sub-lacunary
sequences as well) having this property. For polynomially growing sequences
very little is known. Rudnick and Sarnak conjectured \cite{RS} that for any
$d\ge2$ and any $\alpha$ which cannot be approximated too well by
rationals, the sequence $(\alpha n^{d})$ has a Poissonian
nearest-neighbour spacing distribution; while numerical experiments
are supportive of the conjecture (and of Poisson statistics for
many other natural examples of sequences), it remains open until today.

A related class of important fine-scale statistics consists of the
$k$-level correlations ($k\ge2$). Given a compactly supported, real
valued, smooth function $f:\mathbb{R}^{k-1}\to\mathbb{R}$, we define
the $k$-level correlation sum to be
\[
R_{k}\left(f,\left(a_{n}\right),N\right):=\frac{1}{N}\sum_{m\in\mathbb{Z}^{k-1}}\sum_{\substack{x=(x_1,\dots,x_k)\\
		1\le x_{1},\dots,x_{k}\le N\\
		x_{1},\dots,x_{k}\,\text{distinct}
	}
}f\left(N\left(\Delta_{(a_n)}(x)-m\right)\right),
\]
where $$\Delta_{(a_n)}(x):=\left(a_{x_{1}}-a_{x_{2}},a_{x_{2}}-a_{x_{3}},\dots,a_{x_{k-1}}-a_{x_{k}}\right).$$
We say that $\left(a_{n}\right)$ has Poissonian $k$-level correlation,
if for any compactly supported, real valued, smooth function $f:\mathbb{R}^{k-1}\to\mathbb{R}$
we have
\begin{equation}
	\lim_{N\to\infty}R_{k}\left(f,\left(a_{n}\right),N\right)=\int_{\mathbb{R}^{k-1}}f\left(x\right)\,dx,\label{eq:Poisson_k_correlation}
\end{equation}
which again agrees with the random model. It is well-known (see, e.g.,
\cite[Appendix A]{KR}), that Poissonian $k$-level correlations for all $k\ge2$
implies Poissonian nearest-neighbour spacing distribution (it also
implies Poissonian behaviour for other statistics, such as the second-to-nearest-neighbour
spacing distribution, joint nearest-neighbour spacing distribution,
etc.).

In many instances, although one fails to obtain information on the
triple and higher correlations of a sequence, one can still study
the pair correlation ($k=2$), and prove a Poissonian limit.
Rudnick and Sarnak showed \cite{RS} that for any $d\ge2$, the sequence $(\alpha n^{d})$
has Poissonian pair correlation for almost all $\alpha\in\mathbb{R}$.
Recently, a significant progress was made in the study of the sequences $(\alpha n^{\theta})$
where $\theta$ is non-integer: Aistleitner, El-Baz and Munsch proved \cite{AEM} Poissonian pair
correlation for any fixed $\theta>1$ and almost $\alpha\in\mathbb{R};$
this was recently extended by Rudnick and Technau \cite{RT2} to all
fixed $\theta<1$ and almost all $\alpha\in\mathbb{R}$. As for non-metric
results, El-Baz, Marklof and Vinogradov showed \cite{EMV} Poissonian pair correlation for the sequence $\left(\sqrt{n}\right)_{\sqrt{n}\notin\mathbb{Z}}$
(the nearest-neighbour spacing distribution of this sequence
is \emph{non-Poissonian,} see \cite{EM}); recently, Lutsko, Sourmelidis and Technau  proved \cite{LST}
Poissonian pair correlation for the sequence $(\alpha n^{\theta})$
for all fixed $\alpha\ne0$ and $\theta<14/41=0.341\dots$.

In \cite{RT1}, Rudnick and Technau proved that for any \emph{real-valued,}
positive lacunary sequence $\left(a_{n}\right)$, the pair correlation
of $\left(\alpha a_{n}\right)$ is Poissonian for almost all $\alpha>1$, extending a result
of Rudnick and Zaharescu \cite{RZ1} holding only for \emph{integer-valued}
sequences. Our goal is to show metric Poisson behaviour for the higher-level
correlations ($k\ge3)$ of this sequence.
\begin{thm}
	\label{thm:MainThm}Let $\left(a_{n}\right)$ be a real-valued, positive, lacunary sequence.
	For almost all $\alpha\in\mathbb{R}$, the $k$-level correlation
	of $\left(\alpha a_{n}\right)$ is Poissonian for all $k\ge2.$
\end{thm}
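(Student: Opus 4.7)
The plan is to combine the approach of Rudnick--Zaharescu \cite{RZ2}, who handled $k$-level correlations in the integer-valued lacunary case, with the real-valued Diophantine input of Rudnick--Technau \cite{RT1}, who treated the pair correlation ($k=2$), and to extend the latter to arbitrary $k\ge 2$. Writing $e(t):=e^{2\pi it}$ and $\widehat{f}(\xi):=\int f(y)e(-\xi\cdot y)\,dy$, Poisson summation in the $m$-variable (valid since $f$ is smooth and compactly supported) gives
\[
R_k(f,(\alpha a_n),N) = \frac{1}{N^{k}}\sum_{\xi\in\mathbb{Z}^{k-1}} \widehat{f}(\xi/N) \sum_{x\ \text{distinct}} e\bigl(\alpha\,\xi\cdot\Delta_{(a_n)}(x)\bigr).
\]
The $\xi=0$ term equals $\widehat{f}(0)\cdot N(N-1)\cdots(N-k+1)/N^{k}=\int f+O(1/N)$, so it suffices to show that the remainder $E_N(\alpha)$ coming from $\xi\neq 0$ tends to zero for almost every $\alpha$.

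To this end I would bound the second moment $\int |E_N(\alpha)|^{2}\,w(\alpha)\,d\alpha$ against a smooth compactly supported weight $w$. Expanding and integrating in $\alpha$ produces, for each quadruple $(\xi^{(1)},x^{(1)},\xi^{(2)},x^{(2)})$, a factor $\widehat{w}(L)$ with
\[
L := \xi^{(1)}\!\cdot\Delta_{(a_n)}(x^{(1)}) - \xi^{(2)}\!\cdot\Delta_{(a_n)}(x^{(2)}),
\]
a linear combination of at most $2k$ of the $a_n$ with integer coefficients of size $\lesssim N$ (after truncating the effective range of $\xi$ to $|\xi|\lesssim N^{1+\varepsilon}$ via the Schwartz decay of $\widehat{f}$). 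Since $\widehat{w}$ is Schwartz, only quadruples with $|L|=O(1)$ contribute non-negligibly, and the problem reduces to bounding their number.

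This Diophantine counting is the main obstacle, because the real-valued setting precludes any use of integrality of the $a_n$; only the geometric growth $a_{n+1}/a_{n}\ge c>1$ is available. The key principle, already central in \cite{RT1} for $k=2$, is the following: for any linear combination $\sum_{j=1}^{r}\gamma_{j}a_{n_{j}}$ with distinct indices $n_{1}<\cdots<n_{r}$ and integer coefficients $|\gamma_{j}|\le M$ and $\gamma_{r}\neq 0$, either $\bigl|\sum_{j}\gamma_{j}a_{n_{j}}\bigr|\ge\tfrac{1}{2}a_{n_{r}}$, or else $n_{r}-n_{r-1}=O(\log M)$. Iterating this "peel-off" procedure through the at most $2k$ indices in $L$, from the largest downward, forces any quadruple contributing to $|L|=O(1)$ either to have the multi-sets of indices in $x^{(1)}$ and $x^{(2)}$ matched pairwise (the case treated directly in the integer setting by Rudnick--Zaharescu) or to cluster in $O(\log N)$-size blocks with corresponding cancellation among the $\xi$-coefficients. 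A careful bookkeeping of these cases should yield a variance bound $\int|E_N(\alpha)|^{2}\,w(\alpha)\,d\alpha\ll N^{-\eta}$ for some fixed $\eta=\eta(k)>0$.

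Given such a bound, Chebyshev's inequality together with Borel--Cantelli, applied along a polynomial subsequence $N_{j}=\lfloor j^{A}\rfloor$ with $A>1/\eta$, yields $E_{N_{j}}(\alpha)\to 0$ for almost every $\alpha$. A standard sandwich argument, comparing $R_k(f,(\alpha a_n),N)$ for $N_{j}\le N<N_{j+1}$ to $R_k(f_{\pm},(\alpha a_n),N_{j})$ with smooth upper/lower approximants $f_{\pm}$ to $f$, then transfers convergence to all $N$. Finally, a countable dense family of test functions $f$ together with a countable intersection over $k\ge 2$ produce a single full-measure set of $\alpha$ on which $(\alpha a_n)$ has Poissonian $k$-level correlation for every $k\ge 2$.
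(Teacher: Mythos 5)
Your outline matches the paper's proof essentially step for step: Poisson summation in the shift variable, isolating the zero-frequency term as $C_k(N)\widehat f(0)$, bounding the second moment of the remainder against a smooth weight so that the integration in $\alpha$ produces $\widehat\rho$ of a bounded-coefficient combination $L$ of at most $2k$ of the $a_n$, reducing to a Diophantine count of small-$|L|$ configurations via the lacunary clustering (``peel-off'') argument of Rudnick--Zaharescu, and then passing from the variance bound to almost sure convergence by Chebyshev, Borel--Cantelli along a polynomial subsequence, and a sandwich with smooth majorants/minorants (the paper delegates this last step to a ready-made proposition from Technau--Yesha, but it is the same argument).

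One place your sketch is vague exactly where the real-valued case genuinely differs from the integer case: after peeling off blocks from the largest index downward, you can land in the situation where a \emph{single} surviving index $z_r$ carries a nonzero integer coefficient $y_r$ and is constrained by $|y_r a_{z_r}+b|\le K$, i.e.\ $a_{z_r}\in[\alpha-K,\alpha+K]$ for some fixed $\alpha$. In the integer setting of \cite{RZ2} such constraints can be handled by integrality, but here you need the elementary observation (the paper's Lemma~\ref{lem:IntervalCount}) that lacunarity forces $a_{n+1}-a_n\ge a_1(1-1/c)$, so at most $O(K)$ indices $z_r$ can satisfy this. You also want the constraint $y_1+\dots+y_r=0$ fed into the \emph{last} clustered block (via the analogue of Lemma~\ref{lem:region_constraint}) to save the extra power of $M$; ``cancellation among the $\xi$-coefficients'' as stated is not quite the mechanism — within a block the coefficient sum need not vanish, the savings come from the linear slab count plus this one global linear relation. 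These are the two small but essential points that close the bookkeeping; the rest of your argument is faithful to the paper.
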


In particular, we conclude that for almost all $\alpha\in\mathbb{R}$,
the nearest-neighbour spacing distribution (and all other statistics determined by the correlations) of $\left(\alpha a_{n}\right)$
is Poissonian.

\subsection{Outline of the proof}

In Section \ref{sec:counting} we give a combinatorial counting argument which closely follows the argument for integer-valued sequences from \cite[Section 2]{RZ2}, with several adaptations required to extend the proof to real-valued sequences. In Section \ref{sec:proof} we use the bound from the previous section to prove a polynomial decay for the variance of the $k$-level correlation sum, which by a standard argument gives the claimed almost sure convergence.

\subsection{Notation}
Throughout this note, we will interchangeably use the Bachmann-Landau $ O $ notation and the Vingoradov notation $ \ll $, where for readability reasons the implied constants will be omitted, and may depend on $ a_1 $ (the first element of the sequence), the constant $ c $ defined below, the parameters $ r,k,\epsilon,\eta,R $ and the functions $ f,\rho $.

\section{A counting argument}
\label{sec:counting}
Let $(a_n)_{n=1}^\infty$ be a lacunary sequence of positive
\emph{real} numbers, i.e., $a_1>0$, and there exists a constant $c>1$ such that
\begin{equation}
a_{n+1}\ge ca_n\label{eq:lacunarity}
\end{equation}
for all integers $n\ge1$.

Our goal in this section is to prove the following proposition:
\begin{prop}
\label{prop:MainProp}Let $k\ge2$, $N\ge1$, $\epsilon>0$. The number
of
\begin{align*}
n&=(n_{1},\dots,n_{k-1})\in \mathbb{Z}^{k-1},\;m=(m_{1},\dots,m_{k-1})\in \mathbb{Z}^{k-1},\\w&=(w_{1},\dots,w_{k})\in\mathbb{Z}^k,\;w'=(w'_{1},\dots,w_{k}')\in\mathbb{Z}^k
\end{align*}
 such that $1\le w_{1},\dots,w_{k}\le N$ are distinct, $1\le w'_{1},\dots,w'_{k}\le N$
are distinct, $$1 \le \left\Vert n \right\Vert _\infty \le N^{1+\epsilon},
1 \le \left\Vert m \right\Vert _\infty \le N^{1+\epsilon},$$ and $$\abs{n \cdot \Delta_{(a_n)}(w) - m \cdot \Delta_{(a_n)}(w')} \le N^{\epsilon}$$
is $O\left(N^{2k-1+4k\epsilon}\right).$
\end{prop}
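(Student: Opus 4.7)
The plan is to closely follow Rudnick--Zaharescu's counting argument for integer-valued lacunary sequences, adapting it to handle the real-valued inequality $|L| \le N^\epsilon$ in place of the diophantine equation $L = 0$. The first step will be a summation-by-parts rewriting: introducing $u_1 = n_1$, $u_i = n_i - n_{i-1}$ for $2 \le i \le k-1$, and $u_k = -n_{k-1}$, one obtains $n \cdot \Delta_{(a_n)}(w) = \sum_{i=1}^k u_i a_{w_i}$ with $\sum_i u_i = 0$ and $|u_i| \le 2N^{1+\epsilon}$, and analogously $m \cdot \Delta_{(a_n)}(w') = \sum_{i=1}^k u'_i a_{w'_i}$. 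After merging common indices and collecting terms, the constraint becomes
$$\biggl|\sum_{j=1}^{\ell} d_{j}\, a_{v_j}\biggr| \le N^\epsilon,$$
where $v_1 < v_2 < \dots < v_\ell$ are the distinct values in $w \cup w'$ (so $\ell \le 2k$) and $|d_j| \le 4 N^{1+\epsilon}$.

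The engine of the argument is lacunarity: the tail bound $a_{v_j} \le c^{-(v_\ell - v_j)} a_{v_\ell}$ gives $\sum_{j < \ell} |d_j|\, a_{v_j} \ll N^{1+\epsilon}\, a_{v_{\ell-1}}$, and combined with the inequality this pins the top coefficient: for each choice of $v_1, \dots, v_\ell$ and of $d_1, \dots, d_{\ell - 1}$, the integer $d_\ell$ lies in an interval of length $\ll N^\epsilon / a_{v_\ell}$, which contains $O(1)$ points whenever $a_{v_\ell} \gg N^\epsilon$; the degenerate regime $a_{v_\ell} \ll N^\epsilon$ forces $v_\ell = O(\log N)$ and contributes only polylogarithmically. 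I would then iterate this ``peeling'' of the dominant coefficient: each step eliminates one coordinate of the coefficient vector at a cost of at most an $N^\epsilon$ factor, while the interplay between successive top indices produces closeness constraints (gaps bounded by $O(\log N)$) that save a further factor of $N$ in the index count. After a careful bookkeeping over the partitions of the multiset $w \cup w'$ (distinguishing when the top index is exclusive to $w$, exclusive to $w'$, or shared), one should arrive at the bound $O(N^{2k-1+4k\epsilon})$.

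The hardest part, compared with the integer-valued setting of \cite{RZ2}, is that the real-valued inequality $|L| \le N^\epsilon$ cannot be reduced to a finite collection of exact equations $L = t$ with $t \in \mathbb{Z}$. The workaround will be to transfer the integer rigidity from the value side to the coefficient side: the exponential growth of $a_{v_\ell}$ ensures that the slab $\{d_\ell \in \mathbb{Z} : |d_\ell a_{v_\ell} + \sum_{j<\ell} d_j a_{v_j}| \le N^\epsilon\}$ contains $O(1)$ lattice points, even though $L$ itself takes a continuum of values. The remaining technical difficulties -- namely, the small-index regime where lacunarity is weak, and the bookkeeping needed when some $w_i$ coincides with some $w'_j$ -- are lower order but require systematic case analysis.
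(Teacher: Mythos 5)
Your proposal follows the same RZ2-style blueprint as the paper: Abel summation to rewrite $n\cdot\Delta(w)$ as $\sum u_i a_{w_i}$ with $\sum u_i=0$, merging the indices of $w\cup w'$ into distinct $v_1<\dots<v_\ell$, and exploiting lacunarity together with a partition of the indices into $O(\log N)$-close clusters. However, as sketched, the argument falls short of the target exponent $2k-1$, for the following reason.

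The linchpin you leave out is the linear relation $\sum_j d_j = 0$, which is preserved both by the Abel rewrite and by the merging step. Your blocking scheme, taken at face value, yields: choose one representative index per close-by block (say $l$ blocks among the $\ell\le 2k$ distinct indices), each in $[1,N]$ — that is $O(N^{l})$; the remaining $\ell-l$ indices live in windows of length $O(\log N)$; and for each block the slab inequality pins the \emph{top} coefficient of that block given everything else, so a block of size $b$ contributes $O(N^\epsilon\,M^{b-1})$ coefficient choices with $M\sim N^{1+\epsilon}$. Multiplying up gives roughly $N^{\epsilon l}\,M^{\ell}\,N^{o(1)}\sim N^{2k+O(\epsilon)}$ — one full power of $N$ too much. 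The paper recovers this power precisely by invoking $\sum d_j=0$ in the \emph{last} block: if that block has $\ge 2$ elements, substituting the constraint wins an extra $M$-factor (Lemma~\ref{lem:region_constraint}); if it has exactly one element, the last coefficient $y_r$ is forced, and the interval count Lemma~\ref{lem:IntervalCount} shows the remaining index $z_r$ takes only $O(N^\epsilon)$ values rather than $O(N)$. Your sketch never uses $\sum d_j=0$, so neither saving is available.

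Relatedly, the claim that ``each step eliminates one coordinate of the coefficient vector at a cost of at most an $N^\epsilon$ factor'' cannot be literally correct: there is a single inequality, and it pins only one coefficient per block (the dominant one); the remaining coordinates within a block are free up to the box constraint. The per-step $N^\epsilon$ cost is a per-block phenomenon, and the index savings comes from a separate mechanism (the $O(\log N)$-width of each block), not from iterating the peeling coordinate-by-coordinate. Finally, the degenerate case in which all merged coefficients $d_j$ vanish (so the inequality becomes vacuous) needs a dedicated argument — your remark that ``$a_{v_\ell}\ll N^\epsilon$ contributes only polylogarithmically'' addresses a different (and harmless) regime and does not cover this; the paper handles the vanishing-coefficient case in the final step of the proof of the proposition by a direct count of the matchings between $w$ and $w'$, giving $O(N^{2k-1+2k\epsilon})$.
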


We will begin with an auxiliary
lemma.

\begin{lem}
\label{lem:IntervalCount}Let $I\subseteq\left(0,\infty\right)$ be
a finite interval. Then
\begin{equation}
\#\left\{ n\ge1:\,a_n\in I\right\} \le C\left|I\right|+1,\label{eq:Interval_bound}
\end{equation}
where $C:=\left(a_1\left(1-\frac{1}{c}\right)\right)^{-1}.$
\end{lem}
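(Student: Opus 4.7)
The plan is to exploit the geometric growth forced by the lacunarity condition $a_{n+1}\ge ca_n$ in the most elementary way possible. First I would record that since $c>1$ and $a_1>0$, the sequence $(a_n)$ is strictly increasing with $a_n\ge a_1$ for every $n\ge1$, and that for any $m>n$ one has $a_m\ge ca_n$, so
\[
a_m-a_n\ge (c-1)a_n\ge (c-1)a_1.
\]
Thus \emph{any} two elements of the sequence are separated by at least $(c-1)a_1$, not merely consecutive ones.

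Next I would enumerate the indices with $a_n\in I$ in increasing order as $n_1<n_2<\dots<n_k$, apply the gap estimate above to each consecutive pair $(n_i,n_{i+1})$, and telescope:
\[
|I|\ge a_{n_k}-a_{n_1}=\sum_{i=1}^{k-1}(a_{n_{i+1}}-a_{n_i})\ge (k-1)(c-1)a_1,
\]
where the first inequality uses that $a_{n_1},a_{n_k}\in I$. Rearranging gives $k\le 1+\frac{|I|}{(c-1)a_1}$. A direct comparison with $C=\bigl(a_1(1-1/c)\bigr)^{-1}=\frac{c}{(c-1)a_1}$ shows $\frac{1}{(c-1)a_1}=C/c\le C$, so the claimed bound $k\le 1+C|I|$ follows.

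I do not anticipate any real obstacle: the statement is a direct consequence of geometric growth, and the argument reduces to a single telescoping sum. The only subtlety worth flagging is that one must use the crude lower bound $a_{n_i}\ge a_1$ in place of the sharper geometric estimate $a_{n_i}\ge a_1c^{n_i-1}$; the latter would yield a \emph{logarithmic} dependence on $|I|$, whereas the lemma demands a \emph{linear} dependence, which is exactly what the crude estimate provides.
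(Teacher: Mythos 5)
Your argument is correct and is essentially the same as the paper's: both establish a uniform positive lower bound on the gaps between sequence elements (you via $a_m - a_n \ge (c-1)a_n \ge (c-1)a_1$, the paper via $a_{n+1}-a_n = a_{n+1}(1-a_n/a_{n+1}) \ge a_1(1-1/c)$) and then telescope over the terms lying in $I$. Your gap bound $(c-1)a_1$ is in fact slightly sharper than the paper's $a_1(1-1/c)$, so the resulting constant is a bit better than needed, and your concluding comparison $1/((c-1)a_1) = C/c \le C$ correctly closes the argument.
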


\begin{proof}
By (\ref{eq:lacunarity}), we have
\[
a_{n+1}-a_n=a_{n+1}\left(1-\frac{a_n}{a_{n+1}}\right)\ge a_{n+1}\left(1-\frac{1}{c}\right)\ge a_1\left(1-\frac{1}{c}\right),
\]
and (\ref{eq:Interval_bound}) follows.
\end{proof}

In the rest of this section, we will follow
the strategy of \cite[Section 2]{RZ2}, adapted to real-valued sequences.

\begin{lem}
\label{lem:region}Let $r\ge1$ be an integer, $C\ge1$, $A_{1}>A_{2}>\dots>A_{r}>0$
real numbers and $b\in\mathbb{R}.$ For any $M\ge1$, the number of
vectors $y=\left(y_{1},\dots,y_{r}\right)\in\mathbb{Z}^{r}$ with
$\left|y_{1}\right|,\dots,\left|y_{r}\right|\le M$ such that
\begin{equation}
\left|y_{1}A_{1}+\dots+y_{r}A_{r}+b\right|\le CA_{1}\label{eq:region}
\end{equation}
is $O\left(CM^{r-1}\right)$.
\end{lem}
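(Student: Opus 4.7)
The plan is to exploit the fact that $A_1$ is the largest of the $A_i$'s and reduce to a one-variable count by fixing all coordinates except $y_1$.

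Concretely, I would fix an arbitrary tuple $(y_2,\dots,y_r)\in\mathbb{Z}^{r-1}$ with $|y_i|\le M$ for $2\le i \le r$; there are at most $(2M+1)^{r-1}=O(M^{r-1})$ such tuples. For each such tuple, let $S:=b+y_2A_2+\dots+y_rA_r$, so condition (\ref{eq:region}) becomes $|y_1 A_1 + S|\le CA_1$, which upon dividing by $A_1>0$ is equivalent to
\[
y_1 \in \left[-\frac{S}{A_1} - C,\; -\frac{S}{A_1} + C\right].
\]
This is an interval of length $2C$, so it contains at most $2C+1\le 3C$ integers (using $C\ge 1$). Hence the number of admissible $y_1$ for each fixed tuple $(y_2,\dots,y_r)$ is $O(C)$.

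Multiplying the number of tuples $(y_2,\dots,y_r)$ by the $O(C)$ bound on admissible $y_1$ yields a total count of $O(CM^{r-1})$, as required. The ordering hypothesis $A_1>A_2>\dots>A_r>0$ is used only to identify $A_1$ as a strictly positive real number (so that the division is legitimate and the resulting window for $y_1$ is an interval of length $2C$ independent of the other $A_i$); no other property of the $A_i$'s is needed. There is essentially no obstacle here: the argument is a one-line application of the pigeonhole-type bound ``number of integers in an interval $\le$ length $+1$'' after separating off the largest-coefficient variable.
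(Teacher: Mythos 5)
Your proof is correct and is essentially the same as the paper's: fix $y_2,\dots,y_r$ (giving $O(M^{r-1})$ choices), divide the inequality by $A_1>0$ to confine $y_1$ to an interval of length $2C$, and use $C\ge1$ to conclude there are $O(C)$ integer choices for $y_1$. (The paper's write-up has an inconsequential sign slip in the center of the interval, $\alpha$ rather than $-\alpha$; your version has the correct sign.)
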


\begin{proof}
The variables $ y_2,\dots,y_r $ can take at most $ O(M^{r-1}) $ values. Fix $ y_2,\dots,y_r $ and denote \[ \alpha := \frac{y_2 A_2 +\cdots + y_r A_r +b}{A_1}. \] Then, $ y_1 \in [\alpha - C, \alpha + C] $, and therefore $ y_1 $ can take at most $ O(C) $ values.
\end{proof}
\begin{lem}
\label{lem:region_constraint}Let $r\ge2$ be an integer, $C\ge1$,
$z_{1}>z_{2}>\dots>z_{r}>0$ integers and $b\in\mathbb{R}$. For any
$M\ge1$, $d\in\mathbb{Z}$, the number of vectors $y=\left(y_{1},\dots,y_{r}\right)\in\mathbb{Z}^{r}$
with $\left|y_{1}\right|,\dots,\left|y_{r}\right|\le M$ such that
\begin{align}
\left|y_{1}a_{z_{1}}+\dots+y_{r}a_{z_{r}}+b\right| & \le Ca_{z_{1}}\label{eq:region_constraint}\\
y_{1}+\dots+y_{r} & =d\nonumber
\end{align}
is $O\left(CM^{r-2}\right).$
\end{lem}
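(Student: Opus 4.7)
The plan is to use the linear constraint to eliminate one variable and reduce to the unconstrained counting problem of Lemma \ref{lem:region}. Writing $y_1 = d - y_2 - \cdots - y_r$ and substituting into \eqref{eq:region_constraint}, the linear form becomes
\[
(d - y_2 - \cdots - y_r)a_{z_1} + y_2 a_{z_2} + \cdots + y_r a_{z_r} + b = d\,a_{z_1} + b + \sum_{j=2}^{r} y_j (a_{z_j} - a_{z_1}).
\]
Setting $y'_j := -y_{j+1}$ for $j=1,\dots,r-1$, and $A'_j := a_{z_1} - a_{z_{j+1}}$, the problem reduces to counting vectors $y' = (y'_1,\dots,y'_{r-1}) \in \mathbb{Z}^{r-1}$ with $|y'_j| \le M$ such that
\[
\left| y'_1 A'_1 + \cdots + y'_{r-1} A'_{r-1} + b' \right| \le C a_{z_1},
\]
where $b' := d\,a_{z_1} + b$.

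Next I would check that the coefficients $A'_j$ can be put in the strictly decreasing order required by Lemma \ref{lem:region}. Since $z_1 > z_2 > \cdots > z_r$ and $(a_n)$ is strictly increasing by lacunarity, we have $a_{z_1} > a_{z_2} > \cdots > a_{z_r} > 0$, so $A'_{r-1} > A'_{r-2} > \cdots > A'_1 > 0$; reindexing in the opposite order gives a valid input for Lemma \ref{lem:region}.

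The one piece of additional input needed is that the bound $C a_{z_1}$ on the right-hand side is of the form $C'' A'_{r-1}$ for a constant $C''$ comparable to $C$. This follows from the lacunarity assumption \eqref{eq:lacunarity}: since $z_1 \ge z_r + 1$,
\[
A'_{r-1} = a_{z_1} - a_{z_r} \ge a_{z_1}\!\left(1 - \tfrac{1}{c}\right),
\]
so $C a_{z_1} \le \frac{C}{1 - 1/c}\, A'_{r-1}$. Applying Lemma \ref{lem:region} with $r-1$ variables and this renormalized constant then yields the count $O(C M^{r-2})$, as required. No real obstacle is expected here --- the one point to keep track of is that the factor $1/(1-1/c)$ is absorbed into the implicit constant, which is permitted by the conventions of the paper since the implied constants may depend on $c$.
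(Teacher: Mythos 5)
Your proposal is correct and follows essentially the same route as the paper: use the linear constraint to eliminate one variable (you eliminate $y_1$, the paper eliminates $y_r$, but this is only a cosmetic difference), observe that the resulting coefficients are differences of the $a_{z_i}$'s which are strictly ordered, note that the largest such difference is $a_{z_1}-a_{z_r}\ge(1-1/c)a_{z_1}$ by lacunarity so the constant $C/(1-1/c)$ works, and then invoke Lemma \ref{lem:region} in $r-1$ variables. The only thing to tidy up is the reindexing to put the $A'_j$ in decreasing order, which you already flag; the paper's choice of eliminating $y_r$ avoids even that.
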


\begin{proof}
We argue as in \cite[Lemma 2.2]{RZ2} (where $(a_n)$ is
assumed to be an integer valued lacunary sequence and $b$ is assumed
to be integer): by substituting the constraint $y_{1}+\dots+y_{r}=d$
in the inequality (\ref{eq:region_constraint}), we conclude that we
have to bound the number of integer points in the region
\[
\left|y_{1}\left(a_{z_{1}}-a_{z_{r}}\right)+\dots+y_{r-1}\left(a_{z_{r-1}}-a_{z_{r}}\right)+b+da_{z_{r}}\right|\le Ca_{z_{1}};
\]
since $a_{z_{1}}-a_{z_{r}}\ge\left(1-\frac{1}{c}\right)a_{z_{1}},$
we can apply Lemma \ref{lem:region} with $A_{i}=a_{z_{i}}-a_{z_{r}}$
and with the constant on the right-hand-side of (\ref{eq:region})
being equal to $C\cdot\left(1-\frac{1}{c}\right)^{-1}$.
\end{proof}
We will now adapt \cite[Lemma 2.3]{RZ2} to our setting.
\begin{lem}
\label{lem:MainLemma}Let $r\ge1$ be an integer. For any $M\ge1,$
$K\ge1,$ $\epsilon>0$, the number of $\left(y_{1},\dots,y_{r},z_{1},\dots,z_{r}\right)\in\mathbb{Z}^{2r}$
with
\begin{align}
\left|y_{1}\right|,\dots,\left|y_{r}\right| & \le M,\label{eq:cond_range}\\
1\le z_{1},\dots,z_{r} & \le M\hspace{1em}\mathrm{distinct}\nonumber \\
\left(y_{1},\dots,y_{r}\right) & \ne\left(0,\dots,0\right)\nonumber
\end{align}
such that
\begin{align}
\left|y_{1}a_{z_{1}}+\dots+y_{r}a_{z_{r}}\right| & \le K\label{eq:cond_region}\\
y_{1}+\dots+y_{r} & =0\nonumber
\end{align}
is $O\left(K^{r}M^{r-1+\epsilon}\right).$
\end{lem}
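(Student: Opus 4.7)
The plan is to prove this by induction on $r$, closely following the strategy of \cite[Lemma~2.3]{RZ2} but with Lemma~\ref{lem:IntervalCount} taking the place of the integer-discreteness arguments that were available in the integer-valued setting. The base case $r = 1$ is vacuous: the constraints $y_1 + \cdots + y_r = 0$ and $(y_1, \ldots, y_r) \ne 0$ are then incompatible.

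For the inductive step I would first, at the cost of a factor $r!$, assume WLOG that $z_1 > z_2 > \cdots > z_r$, so that $a_{z_1}$ is the largest among the $a_{z_i}$'s. I would then split on whether $y_1 = 0$. In the easy case $y_1 = 0$, the tuple $(y_2, \ldots, y_r, z_2, \ldots, z_r)$ is admissible for the $(r-1)$-version of the lemma and $z_1 \in \{1, \ldots, M\}$ is free, so the inductive hypothesis yields a contribution $O(M \cdot K^{r-1} M^{r-2+\epsilon}) = O(K^{r-1} M^{r-1+\epsilon})$, which is absorbed into the claimed bound since $K \ge 1$.

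The bulk of the work lies in the case $y_1 \ne 0$. Here I would apply Lemma~\ref{lem:region_constraint} with $b = 0$, $d = 0$ and $C = \max(K/a_{z_1}, 1)$ to bound, for each fixed ordered $z$-tuple, the number of admissible $y$-vectors by $O((K/a_{z_1} + 1) M^{r-2})$. I would then execute the $z$-summation dyadically in $a_{z_1}$: by lacunarity, each dyadic band contributes only $O(1)$ values of $z_1$, and the residual ordered indices $z_2, \ldots, z_r$ lie in $\{1, \ldots, z_1 - 1\}$. To close the induction, one re-uses estimates on the residual tuple $(y_2, \ldots, y_r, z_2, \ldots, z_r)$ — noting that the sum $y_2 + \cdots + y_r = -y_1$ is now nonzero and the linear constraint becomes a shifted instance — and applies Lemma~\ref{lem:IntervalCount} to control, for each residual configuration, the number of valid values of $z_1$.

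The main obstacle I anticipate is making the $z$-summation in the case $y_1 \ne 0$ sharp enough to reach $O(K^r M^{r-1+\epsilon})$: a naive application of Lemma~\ref{lem:region_constraint} followed by an unrefined $z$-sum produces an unwanted $O(M^{2r-2})$ contribution from configurations where $a_{z_1} \gg K$. Suppressing this term requires carefully propagating the saving $O(K / |y_1|)$ afforded by Lemma~\ref{lem:IntervalCount} through the inductive step, and it is precisely here that the passage from the integer-valued argument of \cite{RZ2} to the real-valued setting demands the most care.
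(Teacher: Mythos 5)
Your setup --- induction on $r$, the vacuous base case, ordering $z_1>\cdots>z_r$ at a cost of $r!$, and removing the case $y_1=0$ by appeal to the inductive hypothesis with a free $z_1$ --- matches the paper's proof. But the bulk case $y_1\neq 0$ as you describe it has a genuine gap that is not merely a matter of ``care.''

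The trouble is that neither leg of your plan actually closes. If you fix the residual tuple $(y_2,\dots,y_r,z_2,\dots,z_r)$ and then count $z_1$ via Lemma~\ref{lem:IntervalCount}, you do get $O(K/|y_1|+1)$ choices of $z_1$, but nothing constrains the residual tuple: once $y_1$ is determined by the linear relation, the inequality $|y_1a_{z_1}+\cdots+y_ra_{z_r}|\le K$ imposes no restriction on $(y_2,\dots,y_r,z_2,\dots,z_r)$ by itself, so they range over $O(M^{2r-2})$ configurations and the total is far too large. Conversely, if you first apply Lemma~\ref{lem:region_constraint} for each $z$-tuple and then sum dyadically in $a_{z_1}$, the ``$+1$'' in $O((K/a_{z_1}+1)M^{r-2})$ swamps the bound exactly as you note. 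Your proposed rescue --- re-using the inductive hypothesis on the residual tuple --- does not apply: the lemma requires $y_2+\cdots+y_r=0$ and a bound $|y_2a_{z_2}+\cdots+y_ra_{z_r}|\le K$, neither of which holds for the residual tuple (the first sum is $-y_1\neq 0$, and the residual linear form is only bounded by $K+|y_1|a_{z_1}$, which is not controlled).

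The missing idea is the partition of the index set $\{1,\dots,r\}$ into blocks $B_1,\dots,B_l$ of indices whose $z_j$ lie within $\frac{\log M}{\log c}$ of each other, so that consecutive block representatives satisfy $a_{z_{j_k}}/a_{z_{j_{k+1}}}>M$. This buys a quantitative trade-off: the number of admissible $z$-tuples with $l$ blocks is only $O(M^{l+\epsilon})$ rather than $M^r$, while the separation between blocks lets one bound the $y$-variables block by block via Lemma~\ref{lem:region} (and Lemma~\ref{lem:region_constraint} for the last block), yielding $O(K^{l}M^{r-l-1})$ when $\#B_l\ge 2$. The role you intend for Lemma~\ref{lem:IntervalCount} appears only in the complementary case $\#B_l=1$, and it is applied to the \emph{smallest} index $z_r$, not the largest $z_1$: by then $z_1,\dots,z_{r-1}$ and $y_1,\dots,y_{r-1}$ have already been counted via the blocking, $y_r$ is determined and non-zero, and $a_{z_r}$ lands in an interval of length $O(K)$. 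Without the block partition there is no way to balance the $z$-count against the $y$-count, and the $M^{2r-2}$ loss you flagged cannot be suppressed.
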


\begin{proof}
We prove the lemma by induction on $r$. Clearly, for $r=1$ there
are no vectors satisfying both (\ref{eq:cond_range}), (\ref{eq:cond_region})
(``admissible vectors''), so that the statement of the lemma trivially
holds in this case.

We now assume that the statement holds for $r-1$, and prove it for
$r$. When counting admissible vectors $\left(y_{1},\dots y_{r},z_{1},\dots,z_{r}\right)\in\mathbb{Z}^{2r}$,
we can assume that $y_{i}\ne0$ for all $1\le i\le r$. Indeed, if
there exists $i$ such that $y_{i}=0$, then
\[
\left(y_{1},\dots,y_{i-1},y_{i+1},\dots,y_{r},z_{1},\dots,z_{i-1},z_{i+1},\dots,z_{r}\right)\in\mathbb{Z}^{2\left(r-1\right)}
\]
 are admissible vectors for $r-1$, and therefore by the induction
hypothesis the number of possible $\left(y_{1},\dots,y_{i-1},y_{i+1},\dots,y_{r},z_{1},\dots,z_{i-1},z_{i+1},\dots,z_{r}\right)$
is $O\left(K^{r-1}M^{r-2+\epsilon}\right)$;
since $z_{i}$ can take $O\left(M\right)$ values, the number of admissible
vectors $\left(y_{1},\dots y_{r},z_{1},\dots,z_{r}\right)$ with $y_{i}=0$
is $O\left(K^{r-1}M^{r-1+\epsilon}\right).$

Assume then that $\left(y_{1},\dots y_{r},z_{1},\dots,z_{r}\right)\in\mathbb{Z}^{2r}$
is an admissible vector such that $y_{i}\ne0$ for all $1\le i\le r$;
we can also assume that $z_{1}>z_{2}>\dots>z_{r}$. We will partition
the index set $\left\{ 1,\dots,r\right\} $ to a disjoint union of
sets $B_{1},\dots,B_{l}$ where each set $B_{i}$ will consist of
indices of close-by elements $z_{j}$ in the following sense: $B_{1}$
will consist of $j\in\left\{ 1,\dots,r\right\} $ such that $z_{j}\in\left[z_{1}-\frac{\log M}{\log c},z_{1}\right]$;
if we denote by $j_{2}$ the smallest $j\in\left\{ 1,\dots,r\right\} $
not contained in $B_{1}$, then $B_{2}$ will consist of $j\in\left\{ j_{2},\dots,r\right\} $
such that $z_{j}\in\left[z_{j_{2}}-\frac{\log M}{\log c},z_{j_{2}}\right]$
and so on. If we label by $1=j_{1}<j_{2}<\dots<j_{l}$ the smallest
elements of $B_{1},B_{2},\dots,B_{l}$, then for each $1\le k\le l-1$
we have
\begin{align}
z_{j_{k}},\dots,z_{j_{k+1}-1} & \in\left[z_{j_{k}}-\frac{\log M}{\log c},z_{j_{k}}\right]\nonumber \\
z_{j_{k+1}} & <z_{j_{k}}-\frac{\log M}{\log c}\label{eq:B_distance}
\end{align}
and
\[
z_{j_{l}},\dots,z_{r}\in\left[z_{j_{l}}-\frac{\log M}{\log c},z_{j_{l}}\right].
\]
Since the number of possible partitions of $\left\{ 1,\dots,r\right\} $
into $l$ subsets is $O\left(1\right)$, it is enough to count
the number of admissible vectors which correspond to a given partition.
We distinguish between two cases: $\#B_{l}\ge2$ and $\#B_{l}=1$.

Assume first that $\#B_{l}\ge2$. If we fix $z_{j_{1}},z_{j_{2}},\dots,z_{j_{l}}$,
then each of the remaining numbers $z_{j}$ (there are $r-l$ of them)
belongs to one of the intervals $\left[z_{j_{k}}-\frac{\log M}{\log c},z_{j_{k}}\right],$
and hence can take at most $O\left(\frac{\log M}{\log c}\right)$
values. Thus, $z_{1},z_{2},\dots,z_{r}$ can take at most $O\left(M^{l+\epsilon}\right)$
values; if we fix $z_{1},z_{2},\dots z_{r}$, it is enough to show
that the number of admissible $y_{1},\dots,y_{r}$ is $O\left(K^{r-1}M^{r-l-1}\right).$
Note that by (\ref{eq:B_distance}) and the lacunarity of the sequence
$(a_n)$, we have
\begin{equation}
a_{z_{j_{k}}}/a_{z_{j_{k+1}}}\ge c^{z_{j_{k}}-z_{j_{k+1}}}>M.\label{eq:seperation_bound}
\end{equation}
Fix $z_{1},\dots,z_{r}$ and assume that $y_{1},\dots,y_{r}$ are
admissible. We have
\[
\left|\sum_{j\in B_{1}}y_{j}a_{z_{j}}\right|\le\left|\sum_{j\ge j_{2}}y_{j}a_{z_{j}}\right|+K\le rMa_{z_{j_{2}}}+K < ra_{z_{1}}+K\ll Ka_{z_{1}},
\]
where in the first inequality we used (\ref{eq:cond_region}), and
in the third inequality we used (\ref{eq:seperation_bound}).
By Lemma \ref{lem:region}, we conclude that $y_{1},\dots,y_{j_{2}-1}$
can take at most $O\left(KM^{\#B_{1}-1}\right)$ values.
Now fix $y_{1},\dots,y_{j_{2}-1}$, and set $b=\sum_{j<j_{2}}y_{j}a_{z_{j}}$.
We have
\[
\left|b+\sum_{j\in B_{2}}y_{j}a_{z_{j}}\right|\le\left|\sum_{j\ge j_{3}}y_{j}a_{z_{j}}\right|+K\le rMa_{z_{j_{3}}}+K < ra_{z_{j_{2}}}+K\ll Ka_{z_{j_{2}}},
\]
so that by Lemma \ref{lem:region}, $y_{j_{2}},\dots,y_{j_{3}-1}$
can take at most $O\left(KM^{\#B_{2}-1}\right)$ values.
We repeat this process, and see that $y_{1},\dots,y_{j_{l}-1}$ can
take at most $O\left(K^{l-1}M^{\left(\#B_{1}-1\right)+\dots+\left(\#B_{l-1}-1\right)}\right)$
values, and if we keep them fixed and denote $b=\sum_{j<j_{l}}y_{j}a_{z_{j}}$,
$d=-\sum_{j<j_{l}}y_{j}=\sum_{j\in B_{l}}y_{j}$, then
\[
\left|b+\sum_{j\in B_{l}}y_{j}a_{z_{j}}\right|\ll Ka_{z_{j_{l}}}
\]
and therefore by Lemma \ref{lem:region_constraint} (recall that $\#B_{l}\ge2$),
$y_{j_{l}},\dots,y_{r}$ can take at most $O\left(KM^{\#B_{l}-2}\right)$
values. We see that $y_{1},\dots,y_{r}$ can take at most
\[
O\left(K^{l}M^{\left(\#B_{1}-1\right)+\dots+\left(\#B_{l-1}-1\right)+\left(\#B_{l}-2\right)}\right)=O\left(K^{r-1}M^{r-l-1}\right)
\]
values.

Assume now that $\#B_{l}=1$, so that $j_{l}=r$. By the above argument
$z_{1},\dots,z_{r-1}$ can take at most $O\left(M^{l-1+\epsilon}\right)$
values. We keep $z_{1},\dots,z_{r-1}$ fixed, and again, by the argument
above $y_{1},\dots,y_{r-1}$ can take at most $$O\left(K^{l-1}M^{\left(\#B_{1}-1\right)+\dots+\left(\#B_{l-1}-1\right)}\right)=O\left(K^{l-1}M^{r-l}\right)$$
values. Assume that $y_{1},\dots,y_{r-1},z_{1},\dots,z_{r-1}$ are
fixed. Then $y_{r}=-y_{1}-\dots-y_{r-1}$ is uniquely determined,
and since by our assumption it is non-zero and integer it satisfies
$\left|y_{r}\right|\ge1$. Let us bound the number of possible values
of $z_{r}$: denote \[\alpha:=\frac{-y_{1}a_{z_{1}}-\dots-y_{r-1}a_{z_{r-1}}}{y_{r}}.\]
Then
\[
\left|a_{z_{r}}-\alpha\right|\le\frac{K}{\left|y_{r}\right|}\le K
\]
so that
\begin{equation*}
a_{z_{r}}\in\left[\alpha-K,\alpha+K\right].\label{eq:possible_interval}
\end{equation*}
Hence, by Lemma \ref{lem:IntervalCount}, $z_{r}$ can take at most
$O\left(K\right)$ values, and hence the number
of admissible $y_{1},\dots,y_{r},z_{1},\dots,z_{r}$ is $O\left(K^{r}M^{r-1+\epsilon}\right).$
\end{proof}
We would like to prove a generalization of Lemma \ref{lem:MainLemma}
to vectors $\left(y_{1},\dots,y_{r},z_{1},\dots,z_{r}\right)\in\mathbb{Z}^{2r}$
consisting of non-distinct $z_{1},\dots,z_{r}$. We will require a
non-degeneracy condition that we now describe.

Given a vector $v=\left(y_{1},\dots,y_{r},z_{1},\dots,z_{r}\right)\in\mathbb{Z}^{2r}$,
for any $1\le i\le r$ we let $A\left(i\right):=\left\{ 1\le j\le r:\,z_{j}=z_{i}\right\} $.
We say that the vector $ v $ is degenerate if for any $1\le i\le r$
we have $\sum_{j\in A\left(i\right)}y_{j}=0$, and we say that $v$
is non-degenerate otherwise.
\begin{lem}
\label{lem:NonDegenerate}Let $r\ge1$ be an integer. For any $M\ge1,$
$K\ge1,$ $\epsilon>0$, the number of non-degenerate $\left(y_{1},\dots,y_{r},z_{1},\dots,z_{r}\right)\in\mathbb{Z}^{2r}$
with
\begin{align}
\left|y_{1}\right|,\dots,\left|y_{r}\right| & \le M,\label{eq:cond_range_2}\\
1\le z_{1},\dots,z_{r} & \le M\nonumber
\end{align}
such that
\begin{align}
\left|y_{1}a_{z_{1}}+\dots+y_{r}a_{z_{r}}\right| & \le K\label{eq:cond_region_2}\\
y_{1}+\dots+y_{r} & =0\nonumber
\end{align}
is $O\left(K^{r}M^{r-1+\epsilon}\right).$
\end{lem}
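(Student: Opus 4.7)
The plan is to reduce Lemma \ref{lem:NonDegenerate} to Lemma \ref{lem:MainLemma} by consolidating equal $z$-coordinates into a shorter tuple of distinct ones. Given an admissible vector $\left(y_{1},\dots,y_{r},z_{1},\dots,z_{r}\right)$, let $\tilde z_{1},\dots,\tilde z_{s}$ enumerate the distinct values among $z_{1},\dots,z_{r}$ and, for $1\le t\le s$, set $Y_{t}:=\sum_{j:\,z_{j}=\tilde z_{t}}y_{j}$. Then
\[
\sum_{t=1}^{s}Y_{t}a_{\tilde z_{t}}=\sum_{i=1}^{r}y_{i}a_{z_{i}},\qquad \sum_{t=1}^{s}Y_{t}=\sum_{i=1}^{r}y_{i}=0,
\]
and by the very definition of non-degeneracy, $(Y_{1},\dots,Y_{s})\ne(0,\dots,0)$; in particular $s\ge 2$, since $s=1$ would force $Y_{1}=0$.

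I would first classify admissible vectors according to the set-partition of $\{1,\dots,r\}$ induced by the relation $i\sim j\iff z_{i}=z_{j}$. There are $O(1)$ such partitions, so it suffices to fix one with blocks $A_{1},\dots,A_{s}$ of sizes $r_{1},\dots,r_{s}$. For this fixed partition, the consolidated vector $(Y_{1},\dots,Y_{s},\tilde z_{1},\dots,\tilde z_{s})$ has distinct $\tilde z_{t}\in[1,M]$, satisfies $|Y_{t}|\le rM$, $(Y_{1},\dots,Y_{s})\ne 0$, $\sum_{t}Y_{t}=0$ and $\bigl|\sum_{t}Y_{t}a_{\tilde z_{t}}\bigr|\le K$. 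Applying Lemma \ref{lem:MainLemma} with $r$ replaced by $s$ and $M$ replaced by $rM$ (the constant $r$ being absorbed into the implied constant), we get at most $O\left(K^{s}M^{s-1+\epsilon}\right)$ such consolidated tuples.

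Given a partition and a choice of $(Y_{1},\dots,Y_{s})$, it remains to count the lifts $(y_{1},\dots,y_{r})\in[-M,M]^{r}$ with $\sum_{j\in A_{t}}y_{j}=Y_{t}$ for every $t$. On each block $A_{t}$, we may pick $r_{t}-1$ of the $y_{j}$'s freely in $[-M,M]$ and the remaining one is determined, contributing $O(M^{r_{t}-1})$ choices; multiplying across blocks gives $O\bigl(\prod_{t=1}^{s}M^{r_{t}-1}\bigr)=O(M^{r-s})$. Combining the two bounds, the number of admissible vectors for a fixed partition is $O\left(K^{s}M^{s-1+\epsilon}\cdot M^{r-s}\right)=O\left(K^{s}M^{r-1+\epsilon}\right)\le O\left(K^{r}M^{r-1+\epsilon}\right)$, where in the last inequality we used $K\ge 1$ and $s\le r$. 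Summing over the $O(1)$ partitions yields the claim. There is no real obstacle here beyond the bookkeeping: the crucial observation is that the non-degeneracy hypothesis is exactly what is needed to apply the nonzero-vector clause of Lemma \ref{lem:MainLemma} to the consolidated tuple, while the heavy lifting has already been carried out in that lemma.
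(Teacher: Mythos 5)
Your proposal is correct and follows essentially the same route as the paper: classify by the set-partition of $\{1,\dots,r\}$ induced by equality among the $z_i$ (of which there are $O(1)$), consolidate to $(Y_1,\dots,Y_s,\tilde z_1,\dots,\tilde z_s)$, use non-degeneracy to verify the nonzero hypothesis of Lemma \ref{lem:MainLemma}, apply that lemma to get $O(K^{s}M^{s-1+\epsilon})$ consolidated tuples, and multiply by $O(M^{r-s})$ lifts per tuple. This matches the paper's argument step for step, down to the lift-counting $O\bigl(M^{\#A_i-1}\bigr)$ per block.
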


\begin{proof}
For each vector $ v=\left(y_{1},\dots,y_{r},z_{1},\dots,z_{r}\right)\in\mathbb{Z}^{2r}$
satisfying (\ref{eq:cond_range_2}), (\ref{eq:cond_region_2}) (``admissible
vector''), the corresponding sets $A\left(1\right),\dots,A\left(r\right)$
induce a partition of the index set $\left\{ 1,\dots,r\right\} $
into disjoint union of sets $A_{1},\dots,A_{l}$ (which are exactly
the sets $A\left(1\right),\dots,A\left(r\right)$ without repetitions).
Since the total number of partitions of $\left\{ 1,\dots,r\right\} $
into $l$ subsets is $O\left(1\right)$, we can count only admissible
vectors corresponding to a given partition.

For each $1\le i\le l$, let $\tilde{y_{i}}=\sum_{j\in A_{i}}y_{j}$
and let $\tilde{z}_{i}=z_{j}$ for $j\in A_{i}.$ Then $\left|\tilde{y}_{1}\right|,\dots,\left|\tilde{y}_{l}\right|\ll M$,
$\tilde{y}_{1}+\dots+\tilde{y}_{l}=0,$ by the non-degeneracy condition
$\left(\tilde{y}_{1},\dots,\tilde{y}_{l}\right)\ne\left(0,\dots,0\right)$,
$1\le\tilde{z}_{1},\dots,\tilde{z}_{l}\le M$ are distinct, and $\left|\tilde{y}_{1}a_{\tilde{z}_{1}}+\dots+\tilde{y}_{l}a_{\tilde{z}_{l}}\right|\le K$.
Hence we can apply Lemma \ref{lem:MainLemma} and deduce that $\tilde{y}_{1},\dots,\tilde{y}_{l},\tilde{z}_{1},\dots,\tilde{z}_{l}$
can take at most $O\left(K^{l}M^{l-1+\epsilon}\right)$
values.

We now fix $\tilde{ v}=\left(\tilde{y}_{1},\dots,\tilde{y}_{l},\tilde{z}_{1},\dots,\tilde{z}_{l}\right)$,
and count the number of possible values of $y_{1},\dots,y_{r},z_{1},\dots,z_{r}$
which map to $\tilde{ v}$.
For each $1\le i\le l,$ all values of $z_{j},$ $j\in A_{i}$ are
equal to $\tilde{z}_{i}$, so $z_{1},\dots,z_{r}$ are completely
determined by $\tilde{ v}.$ Moreover, for each $1\le i\le l$
we have $\tilde{y}_{i}=\sum_{j\in A_{i}}y_{j}$, and for fixed $\tilde{y}_{i}$
the number of solutions to this equation is $O\left(M^{\#A_{i}-1}\right).$
Hence $y_{1},\dots,y_{r}$ can take at most $O\left(M^{\left(\#A_{1}-1\right)+\dots+\left(\#A_{l}-1\right)}\right)=O\left(M^{r-l}\right)$
values. We conclude that $ v $ can take at most $O\left(K^{r}M^{r-1+\epsilon}\right)$ values 
as claimed.
\end{proof}
We are now in the position to prove Proposition \ref{prop:MainProp}.
\begin{proof}[Proof of Proposition \ref{prop:MainProp}]
Let $r=2k$, and let
\begin{align*}
z_{1} & =w_{1},\dots,z_{k}=w_{k},\\
z_{k+1} & =w_{1}',\dots,z_{2k}=w_{k}',\\
y_{1} & =n_{1},y_{2}=n_{2}-n_{1},\dots,y_{k-1}=n_{k-1}-n_{k-2},y_{k}=-n_{k-1},\\
y_{k+1} & =-m_{1},y_{k+2}=m_{1}-m_{2},\dots,y_{2k-1}=m_{k-2}-m_{k-1},y_{2k}=m_{k-1}.
\end{align*}
We see that if $n=(n_{1},\dots,n_{k-1}),m=(m_{1},\dots,m_{k-1}) ,w=(w_{1},\dots,w_{k}),w'=(w'_{1},\dots,w_{k}')$
satisfy the conditions of Proposition \ref{prop:MainProp}, then the
vector $ v =\left(y_{1},\dots,y_{r},z_{1},\dots,z_{r}\right)$
satisfies (\ref{eq:cond_range_2}), (\ref{eq:cond_region_2}) with $M=2N^{1+\epsilon}$,
$K=N^{\epsilon}$ together with the additional conditions
\begin{align*}
\left(y_{1},\dots,y_{k}\right) & \ne\left(0,\dots,0\right),\\
\left(y_{k+1},\dots,y_{2k}\right) & \ne\left(0,\dots0\right),\\
y_{1}+\dots+y_{k} & =0,\\
z_{1},\dots,z_{k} & \text{\ensuremath{\hspace{1em}}distinct,}\\
z_{k+1},\dots,z_{2k+1} & \text{\ensuremath{\hspace{1em}}distinct}.
\end{align*}
It is therefore sufficient to bound the number of such ``admissible''
vectors $ v .$

By Lemma \ref{lem:NonDegenerate}, for any $\eta>0$, the number of
non-degenerate admissible vectors is $O\left(K^{r}M^{r-1+\eta}\right)$,
which upon taking $\eta$ sufficiently small is also $O\left(N^{2k-1+4k\epsilon}\right)$.
It remains to count the number of degenerate admissible vectors.

Assume that $ v $ is degenerate, and denote by $s$ the number of variables among $z_{1},\dots,z_{k}$
which are equal to one of the variables $z_{k+1},\dots,z_{2k}$ (clearly
$s\ge1$, since $ v $ is degenerate and $\left(y_{1},\dots,y_{r}\right)\ne\left(0,\dots,0\right)$).
To simplify the notation, we can assume that $z_{1}=z_{k+1},\dots,z_{s}=z_{k+s}$.
Hence, the sets $A_{1},\dots,A_{l}$ defined in the proof of Lemma
\ref{lem:NonDegenerate} are exactly
\[
\left\{ 1,k+1\right\} ,\dots,\left\{ s,k+s\right\} ,\left\{ s+1\right\} ,\dots,\left\{ k\right\} ,\left\{ k+s+1\right\} ,\dots,\left\{ 2k\right\} ,
\]
so that $l=2k-s$. Since $ v $ is degenerate, we have
\begin{equation}
\begin{cases}
y_{i}+y_{k+i}=0 & 1\le i\le s\\
y_{i}=0 & s+1\le i\le k,\,k+s+1\le i\le2k.
\end{cases}\label{eq:degeneracy}
\end{equation}
Given a partition $A_{1},\dots,A_{l}$ (which can be assumed to be
 fixed), there are exactly $l=2k-s$ distinct variables $z_{i}$,
and hence $z_{1},\dots,z_{k}$ can take at most $M^{2k-s}$ values.
Given $y_{2},\dots,y_{s},$ the variables
\[
y_{k+2},\dots,y_{k+s},y_{s+1},\dots,y_{k},y_{k+s+1},\dots,y_{2k}
\]
are determined by (\ref{eq:degeneracy}), whereas $y_{1}$ is determined
by the condition $y_{1}+\dots+y_{k}=0$, and then $y_{k+1}$ is determined
by (\ref{eq:degeneracy}). Hence, the variables $y_{1},\dots,y_{2k}$
can take at most $O\left(M^{s-1}\right)$ values, and the total number
of degenerate vectors $ v $ is at most $O\left(M^{2k-1}\right)=O\left(N^{2k-1+2k\epsilon}\right).$
\end{proof}

\section{Proof of Theorem \ref{thm:MainThm}}
\label{sec:proof}

Fix $k\ge2$. We will now turn to prove our main theorem, estimating the variance of the $k$-level correlation sums $$R_k(f,N)(\alpha) := R_{k}\left(f,\left(\alpha a_{n}\right),N\right)$$ using Proposition \ref{prop:MainProp}. 
It will be technically easier to work with smooth averages; we therefore fix a smooth, compactly supported, non-negative weight function  $\rho:\mathbb{R}\to \mathbb{R}$. 

\subsection{Variance}
We would like to show that the variance of $R_k(f,N)(\alpha)$ w.r.t. $ \alpha $ is small. The fact that the expectation of $R_k(f,N)(\alpha)$ is asymptotic to $\int_{\mathbb{R}^{k-1}}f(x)~dx$ can be shown in a similar way; we omit the proof since it is not required for the proof of Theorem \ref{thm:MainThm}.

Let
\begin{align}
V(R_k(f,N),\rho)&=\int_{\mathbb{R}}\bigabs{R_k(f,N)(\alpha)-C_k(N)\int_{\mathbb{R}^{k-1}}f(x)~dx}^2\rho(\alpha)~d\alpha\nonumber
\end{align}
denote the variance of $R_k(f,N)(\alpha)$, where
\begin{align*}C_k(N):= \left(1-\frac{1}{N}\right)\cdots\left(1-\frac{k-1}{N}\right)=1+O\left(\frac{1}{N}\right).
\end{align*}

\begin{prop}
We have
\begin{align}\label{eq:variance_bound}
V(R_k(f,N),\rho) =  O(N^{-1+\eta})
\end{align}
for all $\eta >0$.
\end{prop}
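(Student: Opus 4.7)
The plan is to linearize the correlation sum via Poisson summation in the variable $m\in\mathbb{Z}^{k-1}$, which will turn the variance into a Fourier-analytic expression amenable to Proposition~\ref{prop:MainProp}. Specifically, since $f$ is Schwartz and compactly supported, for fixed $y\in\mathbb{R}^{k-1}$ Poisson summation yields
\[
\sum_{m\in\mathbb{Z}^{k-1}} f\bigl(N(y-m)\bigr) = \frac{1}{N^{k-1}}\sum_{n\in\mathbb{Z}^{k-1}} \hat{f}(n/N)\,e^{2\pi i n\cdot y},
\]
where $\hat{f}$ is the Fourier transform. Plugging $y=\alpha\Delta_{(a_n)}(w)$ and summing over the $N(N-1)\cdots(N-k+1)$ choices of $w$, the $n=0$ term contributes exactly $C_k(N)\int_{\mathbb{R}^{k-1}} f(x)\,dx$, so after subtraction
\[
R_k(f,N)(\alpha)-C_k(N)\int f = \frac{1}{N^k}\sum_{0\neq n\in\mathbb{Z}^{k-1}}\hat{f}(n/N)\sum_{w}e^{2\pi i n\cdot\alpha\Delta_{(a_n)}(w)}.
\]

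Next I would square and integrate against $\rho(\alpha)d\alpha$. Expanding the modulus squared and exchanging summation and integration produces
\[
V = \frac{1}{N^{2k}}\sum_{n,m\neq 0}\hat{f}(n/N)\overline{\hat{f}(m/N)}\sum_{w,w'}\hat{\rho}\bigl(-(n\cdot\Delta_{(a_n)}(w)-m\cdot\Delta_{(a_n)}(w'))\bigr),
\]
where $w,w'$ range over $k$-tuples of distinct entries in $\{1,\dots,N\}$. Because $\hat{f}$ is Schwartz, the contribution of $(n,m)$ with $\|n\|_\infty>N^{1+\epsilon}$ or $\|m\|_\infty>N^{1+\epsilon}$ is negligible (any negative power of $N$), so I may restrict to $1\le\|n\|_\infty,\|m\|_\infty\le N^{1+\epsilon}$.

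The remaining sum I would split according to whether the argument of $\hat{\rho}$ is small or large. Writing $D:=n\cdot\Delta_{(a_n)}(w)-m\cdot\Delta_{(a_n)}(w')$, I split into $|D|\le N^\epsilon$ and $|D|> N^\epsilon$. On the first range, I use the trivial bound $|\hat{\rho}(\cdot)|,|\hat{f}(\cdot)|=O(1)$ together with Proposition~\ref{prop:MainProp}, which says the number of admissible quadruples $(n,m,w,w')$ is $O(N^{2k-1+4k\epsilon})$; this yields a contribution of $O(N^{-1+4k\epsilon})$ to the variance. On the second range, the Schwartz decay of $\hat{\rho}$ gives $|\hat{\rho}(-D)|\ll_A |D|^{-A}\ll N^{-A\epsilon}$ for arbitrary $A$, while the number of quadruples is crudely bounded by $O(N^{2k(2+\epsilon)})$; choosing $A$ large enough makes this contribution $O(N^{-1})$.

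Combining the two ranges gives $V=O(N^{-1+4k\epsilon})$, and taking $\epsilon=\eta/(4k)$ produces the stated bound. The main conceptual step is the Poisson summation trick, which converts the original periodized $f$ into a pure exponential sum whose arguments are precisely the quantities $n\cdot\Delta_{(a_n)}(w)-m\cdot\Delta_{(a_n)}(w')$ that Proposition~\ref{prop:MainProp} is tailored to count; the main technical annoyance (rather than obstacle) is controlling the tails in $n,m$ and in $D$ via Schwartz decay, but this is standard.
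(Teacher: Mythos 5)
Your proposal is correct and follows the paper's proof essentially step for step: Poisson summation in the shift variable to reach the exponential-sum form, expansion of the variance into the $\hat\rho$-weighted double sum, truncation of $n,m$ to $\|\cdot\|_\infty\le N^{1+\epsilon}$ via Schwartz decay of $\hat f$, the further split of the remaining range by the size of $D=n\cdot\Delta_{(a_n)}(w)-m\cdot\Delta_{(a_n)}(w')$, an application of Proposition~\ref{prop:MainProp} on $|D|\le N^\epsilon$, Schwartz decay of $\hat\rho$ on $|D|>N^\epsilon$, and the final choice $\epsilon=\eta/(4k)$. There is no substantive difference from the paper's argument.
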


\begin{proof}

By the Poisson summation formula, the $k$-level correlation sum is
\begin{align}\label{eq:PoissonSummation}
	R_k(f,N)(\alpha)= C_k(N)\hat{f}(0)+\frac{1}{N^k}\sum_{0_{k-1}\neq n\in\mathbb{Z}^{k-1}}\hat{f}\left(\frac{n}{N}\right)\sum_{\substack{x=(x_1,\dots,x_k)\\
		1\le x_{1},\dots,x_{k}\le N\\
		x_{1},\dots,x_{k}\,\text{distinct}}}e(n \alpha \cdot\Delta_{(a_n)}(x)),
\end{align}
where use the standard notation $e(z)=e^{2 \pi i z}$.

Using \eqref{eq:PoissonSummation}, we have
\begin{align}\label{Var}
    V(R_k(f,N),\rho)=&\frac{1}{N^{2k}}\sum_{\substack{0_{k-1}\neq n,m\in\mathbb{Z}^{k-1}\\ }}\hat{f}\left(\frac{n}{N}\right)\overline{\hat{f}\left(\frac{m}{N}\right)}\sum^*_{x,y}\hat{\rho}(n\cdot\Delta_{(a_n)}(x)-m\cdot\Delta_{(a_n)}(y)),
\end{align}
where the summation in $ \sum\limits^* $ is over $ x=(x_1,\dots,x_k) , y=(y_1,\dots,y_k)$ such that $ 1\le x_1,\dots,x_k \le N $ are distinct, and $ 1\le y_1,\dots,y_k \le N $ are distinct.

Fix $ \epsilon>0 $. We break the sums over $n$ and $m$ into ranges $\max\{\left\Vert n \right\Vert_\infty, \left\Vert m \right\Vert_\infty\}> N^{1+\epsilon}$ and $\max\{\left\Vert n \right\Vert_\infty, \left\Vert m \right\Vert_\infty\}\le N^{1+\epsilon}$. We assume $\max\{\left\Vert n \right\Vert_\infty, \left\Vert m \right\Vert_\infty\}=\left\Vert n \right\Vert_\infty$, since the other case follows similarly. In the range
$0<\left\Vert n \right\Vert_\infty \le N^{1+\epsilon}$, we use the bound $\hat{f}\ll 1$, and in the range
$\left\Vert n \right\Vert_\infty > N^{1+\epsilon}$, we use $\hat{f}(x)\ll \left\Vert x \right\Vert_\infty^{-R}$ for arbitrarily large $ R>0 $ and $\rho \ll 1$. This gives that \eqref{Var} is bounded by
\begin{align} \label{var1}
  &\frac{1}{N^{2k}}\sum_{\substack{0<\left\Vert m \right\Vert_\infty\le N^{1+\epsilon}\\\left\Vert n \right\Vert_\infty>N^{1+\epsilon}}}\left\Vert \frac{n}{N} \right\Vert_\infty^{-R}\sum^*_{x,y}1+ \frac{1}{N^{2k}}\sum_{\left\Vert n \right\Vert_\infty,\left\Vert m \right\Vert_\infty> N^{1+\epsilon}}\left\Vert \frac{n}{N} \right\Vert_\infty^{-R}\left\Vert \frac{m}{N} \right\Vert_\infty^{-R}\sum^*_{x,y}1\nonumber\\&+
  \frac{1}{N^{2k}}\sum_{0< \left\Vert n \right\Vert_\infty, \left\Vert m \right\Vert_\infty\le N^{1+\epsilon}}\sum^*_{x,y}\bigabs{\hat{\rho}\left(n\cdot\Delta_{(a_n)}(x)-m\cdot\Delta_{(a_n)}(y)\right)}.
  \end{align}
The second term in \eqref{var1} is at most
$ O(N^{2(k-1)(1+\epsilon)-2 \epsilon R}).$ Similarly, the first term in \eqref{var1} is
$O(N^{2(k-1)(1+\epsilon)-\epsilon R}).$
In order to estimate the third term in \eqref{var1}, we further break the sum into the ranges $\abs{n\cdot\Delta_{(a_n)}(x)-m\cdot\Delta_{(a_n)}(y)}\le N^\epsilon$, and $\abs{n\cdot\Delta_{(a_n)}(x)-m\cdot\Delta_{(a_n)}(y)}> N^\epsilon$. Using the bound $\hat{\rho}\ll 1,$ the total contribution of the third term in \eqref{var1} restricted to $\abs{n\cdot\Delta_{(a_n)}(x)-m\cdot\Delta_{(a_n)}(y)}\le N^\epsilon$ is
$ \ll \frac{1}{N^{2k}} A(N,\epsilon),$ where
\begin{align*}
   A(N,\epsilon) = \#\big\{&1\le \left\Vert n \right\Vert_\infty, \left\Vert m \right\Vert_\infty \le N^{1+\epsilon}, x=(x_1,\dots,x_k), 1\le x_i\le N \; \mathrm{distinct}, \\ &y=(y_1,\dots,y_k), 1\le y_i\le N \; \mathrm{distinct}, |n\cdot\Delta_{(a_n)}(x)-m\cdot\Delta_{(a_n)}(y)|\le N^\epsilon\big\}.
\end{align*}
Taking $w=x,w'=y$ in Proposition \ref{prop:MainProp}, we get that \[\frac{1}{N^{2k}} A(N,\epsilon)\ll N^{-1+4k\epsilon}.\]

For the second range $\abs{n\cdot\Delta_{(a_n)}(x)-m\cdot\Delta_{(a_n)}(y)} > N^\epsilon$, we have
\[\abs{\hat{\rho}(n\cdot\Delta_{(a_n)}(x)-m\cdot\Delta_{(a_n)}(y))}\ll \abs{n\cdot\Delta_{(a_n)}(x)-m\cdot\Delta_{(a_n)}(y)}^{-R} < N^{-\epsilon R}\] for arbitrarily large $R>0$. This gives that contribution of the third term of \eqref{var1} restricted to this range is at most
\[\sum_{0< \left\Vert n \right\Vert_\infty, \left \Vert m \right\Vert_\infty \le N^{1+\epsilon}} N^{-\epsilon R}\le N^{2(k-1)(1+\epsilon)-\epsilon R}.\]

Finally, the bound \eqref{eq:variance_bound} follows from the above estimates upon taking $ \epsilon = \frac{\eta}{4k} $  and $ R $ sufficiently large.
\end{proof}

\subsection{Almost sure convergence}
Having proved the variance bound \eqref{eq:variance_bound}, the almost sure convergence of the $k$-level correlation sums to $\int_{\mathbb{R}^{k-1}}f\left(x\right)\,dx$ follows from a standard argument, as formulated in a general setting in the following proposition taken from \cite{YT}.

\begin{prop}[{\cite[Proposition 7.1]{YT}}]
	Fix $ k\ge2 $, $ J\subset \mathbb{R} $ a bounded interval, and a sequence $ c_k(N) $ such that $ \lim_{N\to \infty}c_k(N)= 1 $. Let $ (\vartheta_n(\alpha))_{n\ge 1}\;(\alpha \in J) $ be a parametric family of sequences such that the map $ \alpha \mapsto \vartheta_n(\alpha) $ is continuous for each fixed $ n\ge1 $. Assume that there exists $ \delta>0 $ such that for any compactly supported, real
	valued, smooth function $f:\mathbb{R}^{k-1}\to\mathbb{R}$ we have
\begin{align}\label{non_smooth_variance_bound} \int_J\left\vert R_{k}\left(f,(\vartheta_n(\alpha)),N\right)-c_{k}\left(N\right)\int_{\mathbb{R}^{k-1}}f\left(x\right)\,dx\right\vert^{2}\,d\alpha = O(N^{-\delta})
\end{align}
as $ N\to \infty $. Then for almost all $ \alpha \in J $, the sequence $(\vartheta_n(\alpha))_{n\ge 1}$ has Poissonian $ k$-point correlation.
\end{prop}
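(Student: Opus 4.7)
The plan is to upgrade the variance bound \eqref{non_smooth_variance_bound} to almost sure convergence by a Chebyshev--Borel--Cantelli argument along a polynomially growing integer subsequence of $ N $, then extend from a single test function to a countable dense family (and from there, by approximation, to all admissible $ f $), and finally interpolate between consecutive terms of the subsequence to obtain convergence for every $ N\to\infty $.

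For a fixed compactly supported smooth $ f $, I would set $ N_j=\lceil j^{\beta}\rceil $ with $ \beta>2/\delta $. Chebyshev's inequality together with \eqref{non_smooth_variance_bound} yields
\[
\mathrm{meas}\Bigl\{\alpha\in J:\,\bigabs{R_k(f,(\vartheta_n(\alpha)),N_j)-c_k(N_j)\int_{\mathbb{R}^{k-1}} f(x)\,dx}>N_j^{-\delta/4}\Bigr\}\ll N_j^{-\delta/2},
\]
and the right-hand side is summable in $ j $ by the choice of $ \beta $. By the Borel--Cantelli lemma, for almost every $ \alpha\in J $ one has $ R_k(f,(\vartheta_n(\alpha)),N_j)\to \int_{\mathbb{R}^{k-1}} f(x)\,dx $ as $ j\to\infty $.

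To handle all test functions simultaneously, I would fix a countable family $ \{f_i\}_{i\ge 1} $ of smooth compactly supported functions which is dense (in an appropriate norm controlling $ R_k $) inside the space of admissible test functions, apply the preceding step to each $ f_i $, and take the union of the resulting null sets (still null). A standard sandwiching/approximation argument then transfers the a.s.\ convergence along $ (N_j) $ from the $ f_i $'s to every admissible $ f $. To promote convergence along $ (N_j) $ to convergence along all $ N\to\infty $, I would exploit that $ N_{j+1}/N_j=1+O(1/j)\to 1 $: for $ N_j\le N\le N_{j+1} $, one sandwiches $ R_k(f,N)(\alpha) $ between $ R_k(f^{\pm},N_j)(\alpha) $, where $ f^{\pm} $ are fixed majorants/minorants of $ f $ whose integrals differ from $ \int_{\mathbb{R}^{k-1}} f(x)\,dx $ by a quantity tending to $ 0 $ with $ j $, and then applies the countable-family step also to these $ f^{\pm} $. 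The main obstacle is precisely this interpolation: the sandwich must be valid uniformly in $ \alpha $ off a single fixed null set, which is what invoking Borel--Cantelli for the countable collection of majorants delivers. The continuity hypothesis on $ \alpha\mapsto\vartheta_n(\alpha) $ is used at the outset to ensure that the various $ R_k(f,(\vartheta_n(\alpha)),N) $ are genuine Borel-measurable random variables on $ J $ to which Chebyshev and Borel--Cantelli may be applied.
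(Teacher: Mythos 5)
Your proposal is correct and follows the standard approach behind \cite[Proposition 7.1]{YT} (which the paper cites without reproducing): Chebyshev plus Borel--Cantelli along a polynomially growing subsequence $N_j=\lceil j^\beta\rceil$ with $\beta>2/\delta$, followed by a monotone sandwiching argument to pass from the $N_j$ to all $N$ and from a countable test-function family to all admissible $f$; the continuity hypothesis indeed serves only to guarantee measurability of $\alpha\mapsto R_k(f,(\vartheta_n(\alpha)),N)$. Two details in the sketch should be tightened. First, the majorants/minorants should be a fixed pair $f^\pm_\epsilon$ for each small $\epsilon>0$ (not depending on $j$) with $f^-_\epsilon(t)\le f(\lambda t)\le f^+_\epsilon(t)$ whenever $\abs{\lambda-1}\le\epsilon$ and $\int_{\mathbb{R}^{k-1}}(f^+_\epsilon-f^-_\epsilon)\,dx\to 0$ as $\epsilon\to 0$; the valid sandwich for $N_j\le N\le N_{j+1}$ and $f\ge 0$ is then $\tfrac{N_j}{N_{j+1}}R_k(f^-_\epsilon,N_j)(\alpha)\le R_k(f,N)(\alpha)\le\tfrac{N_{j+1}}{N_j}R_k(f^+_\epsilon,N_{j+1})(\alpha)$, valid once $j$ is large enough that $N_{j+1}/N_j\le 1+\epsilon$ (note $N_j$ appears on one side and $N_{j+1}$ on the other, rather than $N_j$ on both as written). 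Second, ``dense in a norm controlling $R_k$'' is not quite the right framing, since $f\mapsto R_k(f,N)$ is not bounded uniformly in $N$ in any fixed norm on $C_c^\infty$; the countable-family step should instead be phrased via this same monotone sandwiching (letting $\epsilon$ range over, say, $\{1/m\}$), after reducing general signed $f$ to the non-negative case by writing $f=(f+g)-g$ with $g\ge\abs{f}$ smooth and compactly supported. With these adjustments your outline is a complete proof and coincides with the cited one.
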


Indeed, we can clearly assume that $\alpha \in J$ where $J$ is a fixed finite interval and take $\rho$ such that $\rho \ge \mathbf{1}_J$. Let $ \vartheta_n(\alpha) = \alpha a_n $ and  $c_k(N)=C_k(N)$; the bound \eqref{non_smooth_variance_bound} follows from \eqref{eq:variance_bound}, since
\begin{align}
	\int_J\left\vert R_{k}\left(f,(\vartheta_n(\alpha)),N\right)-c_{k}\left(N\right)\int_{\mathbb{R}^{k-1}}f\left( x \right)\,d x\right \vert ^{2}\,d \alpha \le V(R_k(f,N),\rho).
\end{align}
Thus, Theorem \ref{thm:MainThm} follows.

 \end{document}